\definecolor{refkey}{gray}{0.4}
\definecolor{labelkey}{gray}{0.3}
\numberwithin{equation}{section}
\newcommand{\be}{\begin{equation}}
\newcommand{\ee}{\end{equation}}
\newcommand{\ba}{\begin{array}}
\newcommand{\ea}{\end{array}}
\newcommand{\bea}{\begin{eqnarray}}
\newcommand{\eea}{\end{eqnarray}}
\newcommand{\beaa}{\begin{eqnarray*}}
\newcommand{\eeaa}{\end{eqnarray*}}
\newcommand{\half}{\frac{1}{2}}
\newcommand{\br}{\mathbb{R}}
\newcommand{\mtn}{{m \times n}}
\newcommand{\LCal}{\mathcal{L}}
\newcommand{\Shrink}{\mathrm{Shrink}}
\newcommand{\Tr}{\mathbf{Tr}}
\newcommand{\etal}{{et al. }}
\newcommand{\st}{\mbox{ s.t. }}
\newcommand{\Prox}{\mathrm{prox}}
\newcommand{\argmin}{\mathrm{argmin}}
\newcommand\independent{\protect\mathpalette{\protect\independenT}{\perp}}
\def\independenT#1#2{\mathrel{\rlap{$#1#2$}\mkern2mu{#1#2}}}
\newtheorem{remark}[theorem]{Remark}
\begin{document}

\title{Alternating Direction Methods for Latent Variable Gaussian Graphical Model Selection}

\author{Shiqian Ma \footnotemark[1] \and Lingzhou Xue \footnotemark[2] \and Hui Zou \footnotemark[3]}

\renewcommand{\thefootnote}{\fnsymbol{footnote}}
\footnotetext[1]{Institute for Mathematics and Its Applications, 400 Lind Hall, 207 Church Street SE, University of Minnesota, Minneapolis, MN 55455, USA.
\quad Email: maxxa007@ima.umn.edu.}
\footnotetext[2]{Department of Operations Research and Financial Engineering, Princeton University, Princeton, NJ 08536, USA. \quad Email: lzxue@princeton.edu}
\footnotetext[3]{School of Statistics, University of Minnesota, Minneapolis, MN 55455, USA. \quad Email: hzou@stat.umn.edu}
\maketitle

\begin{abstract}
Chandrasekaran, Parrilo and Willsky (2010) proposed a convex optimization problem to characterize graphical model selection in the presence of unobserved variables. This convex optimization problem aims to estimate an inverse covariance matrix that can be decomposed into a sparse matrix minus a low-rank matrix from sample data. Solving this convex optimization problem is very challenging, especially for large problems. In this paper, we propose two alternating direction methods for solving this problem. The first method is to apply the classical alternating direction method of multipliers to solve the problem as a consensus problem. The second method is a proximal gradient based alternating direction method of multipliers. Our methods exploit and take advantage of the special structure of the problem and thus can solve large problems very efficiently. Global convergence result is established for the proposed methods. Numerical results on both synthetic data and gene expression data show that our methods usually solve problems with one million variables in one to two minutes, and are usually five to thirty five times faster than a state-of-the-art Newton-CG proximal point algorithm.
\end{abstract}

\begin{keywords}Alternating Direction Method, Proximal Gradient, Global Convergence,
Gaussian Graphical Models, Latent Variables, Sparsity, Low-rank, Regularization.
\end{keywords}

\section{Introduction}\label{sec:intro}

In this paper, we consider alternating direction methods with the theoretical guarantee of global convergence for computing the latent-variable graphical model selection \cite{Chandrasekaran-Parrilo-Willsky-2010}. Graphical model selection is closely related to the inverse covariance matrix estimation problem, which is of fundamental importance in multivariate statistical inference. In particular, when data $X=(X_1,\cdots,X_p)'$ follow a $p$-dimensional joint normal distribution with some unknown variance matrix $\Sigma$, the precision matrix $\Theta=\Sigma^{-1}$ can be directly translated into a Gaussian graphical model. The zero entries in the precision matrix $\Theta=\bigl(\theta_{ij}\bigr)_{1\le i,j\le p}$ precisely capture the desired conditional independencies in the Gaussian graphical model \cite{lauritzen1996,edwards2000}, i.e.
$
\theta_{ij}=0
$
if and only if
$
X_i \independent X_j |~X_{-(i,j)}.
$
The Gaussian graphical model has been successfully used to explore complex systems consisting of Gaussian random variables in many research fields, including gene expression genomics \cite{nfriedman2004,wille2004}, image processing \cite{lisz2009}, macroeconomics determinants study \cite{dobra2009}, and social study \cite{ahmed2009,kolar2010}.

Nowadays, massive high-dimensional data are being routinely generated with rapid advances of modern high-throughput technology (e.g. microarray and functional magnetic resonance imaging). Estimation of a sparse graphical model has become increasingly important in the high-dimensional regime, and some well-developed penalization techniques have received considerable attention in the statistical literature. \cite{meinshausen2006} was the first to study the high-dimensional sparse graphical model selection problem, and they proposed the neighborhood penalized regression scheme which performs the lasso \cite{tibshirani1996} to fit each neighborhood regression and summarizes the sparsity pattern by aggregation via union or intersection. \cite{peng2009} proposed the joint sparse regression model to jointly estimate all neighborhood lasso penalized regressions. \cite{yuan2010} considered the Dantzig selector \cite{candes2007} as an alternative to the lasso in each neighborhood regression. \cite{clime11} proposed a constrained $\ell_1$ minimization estimator called CLIME for estimating sparse precision matrices, and established rates of convergence under both the entrywise $\ell_\infty$ norm and the Frobenius norm. Computationally CLIME can be further decomposed into a series of vector minimization problems.

The $\ell_1$-penalized maximum normal likelihood method is another popular method for graphical model selection \cite{yuan2007,banerjee2008,friedman2008,rothman2008}. \cite{rothman2008} established its rate of convergence under the Frobenius norm. Under the irrepresentable conditions, \cite{ravikumar2008} obtained the convergence rates under the entrywise $\ell_\infty$ norm and the spectral norm. Define the entrywise $\ell_1$ norm of $S$ as the sum of absolute values of the entries of $S$, i.e., $\|S\|_1:=\sum_{ij}|S_{ij}|$. For a given sample covariance matrix $\hat\Sigma\in\br^{p\times p}$, the $\ell_1$-penalized maximum normal likelihood estimation can be formulated as the following convex optimization problem.
\be
\label{prob:Glasso} \min_S \ \langle S,\hat\Sigma\rangle - \log\det S + \rho\|S\|_1,
\ee
where the first part $\langle S,\hat\Sigma\rangle - \log\det S$ gives the normal log-likelihood function of $S$, and the entrywise $\ell_1$ norm $\|S\|_1$ is used to promote the sparsity of the resulting matrix. Note that in the literature the $\ell_1$-penalized maximum normal likelihood usually uses the so-called 1-off absolute penalty $\|S\|_{1,{\rm off}}:=\sum_{i \ne j}|S_{ij}|$. However, $\|S\|_1$ and $\|S\|_{1,{\rm off}}$ cause no difference when using our algorithm. \cite{Chandrasekaran-Parrilo-Willsky-2010} have used $\|S\|_1$ in defining their convex optimization problem and hence we follow their convention in the current paper.

The aforementioned Gaussian graphical model selection methods were proposed under the ideal setting without missing variables. The recent paper by \cite{Chandrasekaran-Parrilo-Willsky-2010} considered a more realistic scenario where the full data consist of both observed variables and missing (hidden) variables. Let $X_{p\times 1}$ be the observed variables. Suppose that there are some hidden variables $Y_{r\times 1}$ ($r\ll p$) such that $(X,Y)$ jointly follow a multivariate normal distribution. Denote the covariance matrix by $\Sigma_{(X,Y)}$ and the precision matrix by $\Theta_{(X,Y)}$. Then we can write $\Sigma_{(X,Y)}=[\Sigma_X,\Sigma_{XY};\Sigma_{YX},\Sigma_Y]$ and $\Theta_{(X,Y)}=[\Theta_X,\Theta_{XY};\Theta_{YX},\Theta_Y]$. Given the hidden variables $Y$, the conditional concentration matrix of observed variables, $\Theta_X$, is sparse for a sparse graphical model. However, the marginal concentration matrix of observed variables, $\Sigma_X^{-1}=\Theta_X-\Theta_{XY}\Theta_{Y}^{-1}\Theta_{YX}$, might not be a sparse matrix but a difference between the sparse term $\Theta_X$ and the low-rank term $\Theta_{XY}\Theta_{Y}^{-1}\Theta_{YX}$. The problem of interest is to recover the sparse conditional matrix $\Theta_X$ based on observed variables $X$. \cite{Chandrasekaran-Parrilo-Willsky-2010} accomplished this goal by solving a convex optimization problem under the assumption that $\Sigma_X^{-1}=S-L$ for some sparse matrix $S$ and low-rank matrix $L$. The low rank assumption on $L$ holds naturally since $r$ is much less than $p$. Motivated by the success of the convex relaxation for rank-minimization problem, \cite{Chandrasekaran-Parrilo-Willsky-2010} introduced a regularized maximum normal likelihood decomposition framework called the latent variable graphical model selection (LVGLASSO) as follows.
\be\label{prob:latent-variable-original}
\min_{S,L} \ \langle S-L, \hat\Sigma_X \rangle - \log\det(S-L) + \alpha\|S\|_1 + \beta\Tr(L), \quad \st \ S-L\succ 0, L\succeq 0,
\ee
where $\hat\Sigma_X$ is the sample covariance matrix of $X$ and $\Tr(L)$ denotes the trace of matrix $L$. In the high-dimensional setting, \cite{Chandrasekaran-Parrilo-Willsky-2010} established the consistency theory for \eqref{prob:latent-variable-original} concerning its recovery of the support and sign pattern of $S$ and the rank of $L$.

Solving the convex optimization problem \eqref{prob:latent-variable-original} is very challenging, especially for large problems. \cite{Chandrasekaran-Parrilo-Willsky-2010} considered \eqref{prob:latent-variable-original} as a log-determinant semidefinite programming (SDP) problem, and used a Newton-CG based proximal point algorithm (LogdetPPA) proposed by \cite{Wang-Sun-Toh-2009} to solve it. However, LogdetPPA does not take advantage of the special structure of the problem, and we argue that it is inefficient for solving large-scale problems. To illustrate our point, let us consider the special case of \eqref{prob:latent-variable-original} with $L=0$, and then the latent variable graphical model selection \eqref{prob:latent-variable-original} exactly reduces to the Gaussian graphical model selection \eqref{prob:Glasso}. Note that \eqref{prob:Glasso} can be rewritten as
\[
\min_S \ \max_{\|W\|_\infty\leq\rho} -\log\det S+\langle\hat\Sigma_X+W,X\rangle,
\] where $\|W\|_\infty$ is the largest absolute value of the entries of $U$. The dual problem of \eqref{prob:Glasso} can be obtained by exchanging the order of max and min, i.e.,
\[
\max_{\|W\|_\infty\leq\rho}\min_S \ -\log\det X+\langle\hat\Sigma_X+W,S\rangle,
\] which is equivalent to
\be\label{prob:Glasso-dual}
\max_W\ \{ \log\det W +p :  \|W-\hat\Sigma_X\|_\infty\leq\rho\}.
\ee
Both the primal and the dual graphical Lasso problems \eqref{prob:Glasso} and \eqref{prob:Glasso-dual} can be viewed as semidefinite programming problems and can be solved via interior point methods (IPMs) in polynomial time \cite{BV-ConvexBook2004}. However, the per-iteration computational cost and memory
requirements of an IPM are prohibitively high for \eqref{prob:Glasso} and \eqref{prob:Glasso-dual}, especially when the size of the matrix is large.
Customized SDP based methods such as the ones studied in \cite{Wang-Sun-Toh-2009} and \cite{Li-Toh-2010} require a reformulation of the problem that increases the size of the problem and thus makes them impractical for solving large-scale problems.
Therefore, most of the methods developed for solving \eqref{prob:Glasso} and \eqref{prob:Glasso-dual} are first-order methods. These methods include block coordinate descent type methods \cite{banerjee2008,friedman2008,Scheinberg-Rish-2009,Witten11}, projected gradient method \cite{Duchi-UAI-2008} and variants of Nesterov's accelerated method \cite{Aspremont-Banerjee-ElGhaoui-2008,Lu-covsel-siopt-2009}. Recently, alternating direction methods have been applied to solve \eqref{prob:Glasso} and shown to be very effective \cite{Yuan-2009,Scheinberg-Ma-Goldfarb-NIPS-2010}.

In this paper, we propose two alternating direction type methods to solve the latent variable graphical model selection. The first method is to apply the alternating direction method of multipliers to solve this problem. This is due to the fact that the latent variable graphical model selection can be seen as a special case of the consensus problem discussed in \cite{Boyd-etal-ADM-survey-2011}. The second method we propose is an alternating direction method with proximal gradient steps. To apply the second method, we first group the variables into two blocks and then apply the alternating direction method with one of the subproblems being solved inexactly by taking a proximal gradient step. Our methods exploit and take advantage of the special structure of the problem and thus can solve large problems very efficiently. Although the convergence results of the proposed methods are not very different from the existing results for alternating direction type methods, we still include the convergence proof for the second method in the appendix for completeness. We apply the proposed methods to solving problems from both synthetic data and gene expression data and show that our method outperform the state-of-the-art Newton-CG proximal point algorithm LogdetPPA significantly on both accuracy and CPU times.

The rest of this paper is organized as follows. In Section \ref{sec:preliminaries}, we give some preliminaries on alternating direction method of multipliers and proximal mappings. In Section \ref{sec:consensus}, we propose solving LVGLASSO \eqref{prob:latent-variable-original} as a consensus problem using the classical alternating direction method of multipliers. We propose the proximal gradient based alternating direction method for solving \eqref{prob:latent-variable-original} in Section \ref{sec:ADMM}. In Section \ref{sec:numerical}, we apply our alternating direction method to solving \eqref{prob:latent-variable-original} using both synthetic data and gene expression data. We draw some conclusions in Section \ref{sec:conclusion}.

\section{Preliminaries}\label{sec:preliminaries}

Problem \eqref{prob:latent-variable-original} can be rewritten in the following equivalent form by introducing a new variable $R$:
\be\label{prob:latent-variable-original-R}
\ba{ll} \min & \langle R, \hat\Sigma_X \rangle - \log\det R + \alpha\|S\|_1 + \beta\Tr(L) \\
\st & R = S - L, R \succ 0, L \succeq 0, \ea\ee
which can be further reduced to
\be\label{prob:latent-variable}  \min \ \langle R, \hat\Sigma_X \rangle - \log\det R + \alpha\|S\|_1 + \beta\Tr(L) + \mathcal{I}(L\succeq 0),\quad \st \ R -S + L =0,\ee where the indicator function $\mathcal{I}(L\succeq 0)$ is defined as
\be \label{indicator-function} \mathcal{I}(L\succeq 0) := \left\{\ba{ll} 0, & \mbox{ if } L\succeq 0 \\
                                                                        +\infty, & \mbox{ otherwise.}\ea\right. \ee
Note that we have dropped the constraint $R\succ 0$ since it is already implicitly imposed by the $\log\det R$ function.

Now since the objective function involves three separable convex functions and the constraint is simply linear, Problem \eqref{prob:latent-variable} is suitable for alternating direction method of multipliers (ADMM). ADMM is closely related to the Douglas-Rachford and Peaceman-Rachford operator-splitting methods for finding zero of the sum of two monotone operators that have been studied extensively in \cite{Douglas-Rachford-56,Peaceman-Rachford-55,Lions-Mercier-79,Eckstein-thesis-89,Eckstein-Bertsekas-1992,Combettes-Pesquet-DR-2007,Combettes-Wajs-05}.
ADMM has been revisited recently due to its success in the emerging applications of structured convex optimization problems arising from image processing, compressed sensing, machine learning, semidefinite programming and statistics etc.
(see e.g., \cite{Glowinski-LeTallec-89,Gabay-83,Wang-Yang-Yin-Zhang-2008,Yang-Zhang-2009,Goldstein-Osher-08,Qin-Goldfarb-Ma-2011,Yuan-2009,Scheinberg-Ma-Goldfarb-NIPS-2010,
Goldfarb-Ma-Ksplit,Goldfarb-Ma-Scheinberg-2010,
Malick-Povh-Rendl-Wiegele-2009,Wen-Goldfarb-Yin-2009,Boyd-etal-ADM-survey-2011,Boyd-NIPS-2011,Ma-SPCA-2011-submit}).

Problem \eqref{prob:latent-variable} is suitable for alternating direction methods because the three convex functions involved in the objective function, i.e.,
\be \label{function-R} f(R):= \langle R, \hat\Sigma_X \rangle - \log\det R,\ee
\be \label{function-S} g(S):= \alpha\|S\|_1, \ee
and
\be \label{function-L} h(L):= \beta\Tr(L) + \mathcal{I}(L\succeq 0), \ee
have easy proximal mappings. Note that the proximal mapping of function $c:\br^\mtn\rightarrow \br^\mtn$ for given $\xi>0$ and $Z\in\br^\mtn$ is defined as
\be \label{def:proximal-mapping} \Prox(c,\xi,Z):= \argmin_{X\in\br^\mtn} \ \frac{1}{2\xi}\|X-Z\|_F^2 + c(X).\ee The proximal mapping of $f(R)$ defined in \eqref{function-R} is
\be\label{prox-f-R}\Prox(f,\xi,Z) := \argmin_R \ \frac{1}{2\xi}\|R-Z\|_F^2 + \langle R, \hat\Sigma_X \rangle - \log\det R.\ee The first-order optimality conditions of \eqref{prox-f-R} are given by
\be\label{prox-f-R-optcond} R + \xi\hat{\Sigma}_X - Z - \xi R^{-1} = 0. \ee
It is easy to verify that
\be\label{prox-f-R-sol} R:= U\diag(\gamma)U^\top,\ee
satisfies \eqref{prox-f-R-optcond} and thus gives the optimal solution of \eqref{prox-f-R}, where $U\diag(\sigma)U^\top$ is the eigenvalue decomposition of matrix $\xi\hat{\Sigma}_X-Z$ and
\be\label{prox-f-R-sol-eigen} \gamma_i=\left(-\sigma_i+\sqrt{\sigma_i^2+4\xi}\right)/2, \forall i=1,\ldots,p.\ee Note that \eqref{prox-f-R-sol-eigen} guarantees that the solution of \eqref{prox-f-R} given by \eqref{prox-f-R-sol} is a positive definite matrix. The proximal mapping of $g(S)$ defined in \eqref{function-S} is
\be\label{prox-g-S}\Prox(g,\xi,Z) := \argmin_S \ \frac{1}{2\xi}\|S-Z\|_F^2 + \alpha\|S\|_1. \ee
It is well known that \eqref{prox-g-S} has a closed-form solution that is given by the $\ell_1$ shrinkage operation
\[ S^{k+1}:=\Shrink(Z,\alpha\xi), \]
where $\Shrink(\cdot,\cdot)$ is defined as
\be\label{def-L1-shrinkage} [\Shrink(Z,\tau)]_{ij} := \left\{\ba{ll} Z_{ij} - \tau, & \mbox{ if } Z_{ij} > \tau \\
                                                                    Z_{ij} + \tau, & \mbox{ if } Z_{ij} < -\tau \\
                                                                    0           , & \mbox{ if } -\tau \leq Z_{ij}\leq \tau. \ea\right. \ee
The proximal mapping of $h(L)$ defined in \eqref{function-L} is
\be\label{prox-h-L}\Prox(h,\xi,Z) := \argmin_L \ \frac{1}{2\xi}\|L-Z\|_F^2 + \beta\Tr(L) + \mathcal{I}(L\succeq 0).\ee It is easy to verify that the solution of \eqref{prox-h-L} is given by
\be\label{prox-h-L-sol} L:=U\diag(\gamma)U^\top,\ee
where
$Z=U\diag(\sigma)U^\top$ is the eigenvalue decomposition of $Z$ and $\gamma$ is given by
\be\label{prox-h-L-sol-eigen} \gamma_i := \max\{\sigma_i - \xi\beta ,0\}, \quad i=1,\ldots,p. \ee
Note that \eqref{prox-h-L-sol-eigen} guarantees that $L$ given in \eqref{prox-h-L-sol} is a positive semidefinite matrix.

The discussions above suggest the following natural ADMM for solving \eqref{prob:latent-variable} be efficient.
\be\label{alg:ADM-3} \left\{\ba{lll} R^{k+1} & := & \argmin_R \ \LCal_\mu(R,S^k,L^k;\Lambda^k) \\
                                     S^{k+1} & := & \argmin_S \ \LCal_\mu(R^{k+1},S,L^k;\Lambda^k) \\
                                     L^{k+1} & := & \argmin_L \ \LCal_\mu(R^{k+1},S^{k+1},L;\Lambda^k) \\
                                     \Lambda^{k+1} & := & \Lambda^k - (R^{k+1}-S^{k+1}+L^{k+1})/\mu,\ea\right. \ee
where the augmented Lagrangian function is defined as
\be \label{aug-lag-func} \LCal_\mu(R,S,L;\Lambda):= \langle R, \hat\Sigma_X \rangle - \log\det R + \alpha\|S\|_1 + \beta\Tr(L) + \mathcal{I}(L\succeq 0) - \langle \Lambda,R-S+L \rangle + \frac{1}{2\mu}\|R-S+L\|_F^2, \ee $\Lambda$ is the Lagrange multiplier and $\mu>0$ is the penalty parameter. Note that the three subproblems in \eqref{alg:ADM-3} correspond to the proximal mappings of $f$, $g$ and $h$ defined in \eqref{function-R}, \eqref{function-S} and \eqref{function-L}, respectively. Thus they are all easy to solve.
However, the global convergence of ADMM \eqref{alg:ADM-3} with three blocks of variables was ambiguous. Only until very recently, was it shown that \eqref{alg:ADM-3} globally converges under certain conditions (see \cite{Luo-ADMM-2012}). It should be noted, however, that the error bound condition required in \cite{Luo-ADMM-2012} is strong and only a few classes of convex function are known that satisfy this condition.

\section{ADMM for Solving \eqref{prob:latent-variable} as a Consensus Problem}\label{sec:consensus}

Problem \eqref{prob:latent-variable} can be rewritten as a convex minimization problem with two blocks of variables and two separable functions as follows:
\be\label{prob:latent-variable-2-blocks}\ba{ll} \min & \phi(X) + \psi(Z), \\ \st & X - Z =0, \ea\ee
where $X=(R,S,L), Z=(\tilde{R},\tilde{S},\tilde{L})$, and \[\phi(X):=f(R)+g(S)+h(L), \quad \psi(Z)=\mathcal{I}(\tilde{R}-\tilde{S}+\tilde{L}=0),\] with $f,g$ and $h$ defined in \eqref{function-R}, \eqref{function-S} and \eqref{function-L}, respectively. The ADMM applied to solving \eqref{prob:latent-variable-2-blocks} can be described as follows:
\be\label{admm-2-blocks}\left\{\ba{lll} X^{k+1} & := & \argmin_X \ \phi(X) -\langle\Lambda^k,X-Z^k\rangle + \frac{1}{2\mu}\|X-Z^k\|_F^2, \\
                                       Z^{k+1} & := & \argmin_Z \ \psi(Z) -\langle\Lambda^k,X^{k+1}-Z\rangle + \frac{1}{2\mu}\|X^{k+1}-Z\|_F^2, \\
                                       \Lambda^{k+1} & := & \Lambda^k - (X^{k+1}-Z^{k+1})/\mu, \ea\right.\ee
where $\Lambda$ is the Lagrange multiplier associated with the equality constraint.
The two subproblems in \eqref{admm-2-blocks} are both easy to solve. In fact, the solution of the first subproblem in \eqref{admm-2-blocks} corresponds to the proximal mappings of $f$, $g$ and $h$. Partitioning the matrix $T^k:=X^{k+1}-\mu\Lambda^k$ into three blocks in the same form as $Z=(\tilde{R},\tilde{S},\tilde{L})$,  The second subproblem can be reduced to:
\be\label{admm-2-blocks-2nd-reduce}\ba{ll} \min & \half\|(\tilde{R},\tilde{S},\tilde{L})-(T_R^k,T_S^k,T_L^k)\|_F^2 \\ \st & \tilde{R}-\tilde{S}+\tilde{L}=0. \ea\ee The first-order optimality conditions of \eqref{admm-2-blocks-2nd-reduce} are given by
\be\label{admm-2-blocks-2nd-reduce-1st-optcond}(\tilde{R},\tilde{S},\tilde{L})-(T_R^k,T_S^k,T_L^k)-(\Gamma,-\Gamma,\Gamma)=0,\ee
where $\Gamma$ is the Lagrange multiplier associated with \eqref{admm-2-blocks-2nd-reduce}. Thus we get,
\[\tilde{R}=T_R^k+\Gamma, \quad \tilde{S}=T_S^k-\Gamma, \quad \tilde{L}=T_L^k+\Gamma.\] Substituting them into the equality constraint in \eqref{admm-2-blocks-2nd-reduce}, we get \be\label{solve-Gamma}\Gamma = -(T_R^k-T_S^k+T_L^k)/3.\ee By substituting \eqref{solve-Gamma} into \eqref{admm-2-blocks-2nd-reduce-1st-optcond} we get the solution to \eqref{admm-2-blocks-2nd-reduce}.

The ADMM \eqref{admm-2-blocks} solves Problem \eqref{prob:latent-variable-2-blocks} with two blocks of variables. It can be seen as a special case of the consensus problem discussed in \cite{Boyd-etal-ADM-survey-2011}. The global convergence result of \eqref{admm-2-blocks} has also been well studied
in the literature (see e.g., \cite{Eckstein-thesis-89,Eckstein-Bertsekas-1992}).

\section{A Proximal Gradient based Alternating Direction Method}\label{sec:ADMM}

In this section, we propose another alternating direction type method to solve \eqref{prob:latent-variable}.
In Section \ref{sec:consensus}, we managed to reduce the original problem with three blocks of variables \eqref{prob:latent-variable} to a new problem with two blocks of variables \eqref{prob:latent-variable-2-blocks}. As a result, we can use ADMM for solving problems with two blocks of variables, whose convergence has been well studied. Another way to reduce the problem \eqref{prob:latent-variable} into a problem with two blocks of variables is to group two variables (say $S$ and $L$) as one variable. This leads to the new equivalent form of \eqref{prob:latent-variable}:
\be\label{prob:latent-variable-2-blocks-new}\ba{ll}\min & f(R) + \varphi(W) \\
                                                \st & R - [I, -I]W = 0,\ea\ee
where $W=[S;L]$ and $\varphi(W)=g(S)+h(L)$. Now the ADMM for solving \eqref{prob:latent-variable-2-blocks} can be described as
\be\label{admm-2-block-new}\left\{\ba{lll} R^{k+1} & := & \argmin_R \ f(R) - \langle\Lambda^k,R-[I,-I]W^k\rangle + \frac{1}{2\mu}\|R-[I,-I]W^k\|_F^2 \\
                                           W^{k+1} & := & \argmin_W \ \varphi(W) - \langle\Lambda^k,R^{k+1}-[I,-I]W\rangle + \frac{1}{2\mu}\|R^{k+1}-[I,-I]W\|_F^2 \\ \Lambda^{k+1} & := & \Lambda^k - (R^{k+1}-[I,-I]W^{k+1})/\mu, \\\ea\right. \ee
where $\Lambda$ is the Lagrange multiplier associated with the equality constraint and $\mu>0$ is a penalty parameter. The first subproblem in \eqref{admm-2-block-new} is still easy and it corresponds to the proximal mapping of function $f$. However, the second subproblem in \eqref{admm-2-block-new} is not easy, because the two parts of $W$ are coupled together in the quadratic penalty term. To overcome this difficulty, we solve the second subproblem in \eqref{admm-2-block-new} inexactly by one step of a proximal gradient method. Note that the second subproblem in \eqref{admm-2-block-new} can be reduced to
\be\label{admm-2-block-new-sub2} W^{k+1} := \argmin_W \ \varphi(W) + \frac{1}{2\mu}\|R^{k+1}-[I,-I]W-\mu\Lambda^k\|_F^2.\ee One step of proximal gradient method solves the following problem
\be\label{admm-2-block-new-sub2-linearized} \min_W \ \varphi(W) + \frac{1}{2\mu\tau}\|W-(W^k+\tau \begin{pmatrix}I \\ -I\end{pmatrix}(R^{k+1}-[I,-I]W^k-\mu\Lambda^k))\|_F^2. \ee
Since the two parts of $W=[S;L]$ are separable in the quadratic part now, \eqref{admm-2-block-new-sub2-linearized} reduces to two problems
\be\label{admm-2-block-new-sub2-linearized-S} \min_S \ g(S) + \frac{1}{2\mu\tau}\|S-(S^k+\tau G_R^k)\|_F^2, \ee
and
\be\label{admm-2-block-new-sub2-linearized-L} \min_L \ h(L) + \frac{1}{2\mu\tau}\|L-(L^k-\tau G_R^k)\|_F^2, \ee
where $G_R^k = R^{k+1}-S^k + L^k -\mu\Lambda^k$. Both \eqref{admm-2-block-new-sub2-linearized-S} and \eqref{admm-2-block-new-sub2-linearized-L} are easy to solve as they correspond to the proximal mappings of functions $g$ and $h$, respectively. Thus, our proximal gradient based alternating direction method (PGADM) can be summarized as
\begin{algorithm}[h!]
    \caption{A Proximal Gradient based Alternating Direction Method}\label{alg:pgadm}
    {\small
    \begin{algorithmic}[1]
    \FOR{k=0,1,\ldots}
    \STATE $R^{k+1} := \argmin_R \ f(R) - \langle\Lambda^k,R-S^k+L^k\rangle + \frac{1}{2\mu}\|R-S^k+L^k\|_F^2$
    \STATE $S^{k+1} := \argmin_S \ g(S) + \frac{1}{2\mu\tau}\|S-(S^k+\tau G_R^k)\|_F^2$
    \STATE $L^{k+1} := \argmin_L \ h(L) + \frac{1}{2\mu\tau}\|L-(L^k-\tau G_R^k)\|_F^2$
    \STATE $\Lambda^{k+1} := \Lambda^k - (R^{k+1}-S^{k+1}+L^{k+1})/\mu$
    \ENDFOR
    \end{algorithmic}
    }
\end{algorithm}

\begin{remark}
The idea of incorporating proximal step into the alternating direction method of multipliers has been suggested by \cite{Eckstein-OMS-1994} and  \cite{Chen-Teboulle-1994}. This idea has then been generalized by \cite{He-Liao-Han-Yang-2002} to allow varying penalty and proximal parameters. Recently, this technique has been used for sparse and low-rank optimization problems (see \cite{Yang-Zhang-2009} and \cite{Tao-Yuan-SPCP-2011}). More recently, some convergence properties of alternating direction methods with proximal gradient steps have been studied by \cite{Deng-Yin-2012}, \cite{Luo-ADMM-2012}, \cite{Fazel-Sun-2012} and \cite{Ma-APGM-2012}. However, for the seek of completeness, we include a global convergence proof for Algorithm \ref{alg:pgadm} in the Appendix.
\end{remark}

\begin{remark}
In Algorithm \ref{alg:pgadm}, we grouped $S$ and $L$ as one block of variable. We also implemented the other two ways of grouping the variables, i.e., group $R$ and $S$ as one block, and group $R$ and $L$ as one block. We found from the numerical experiments that these two alternatives yielded similar practical performance as Algorithm \ref{alg:pgadm}.
\end{remark}

\begin{remark}
If we use the 1-off absolute penalty $\|S\|_{1,{\rm off}}:=\sum_{i \ne j}|S_{ij}|$ to replace $\|S\|_1$, our algorithm basically remains the same except that we modify
$\Shrink(\cdot,\cdot)$ as follows
\be\label{def-L1-shrinkage-modify}
[\Shrink(Z,\tau)]_{ij} := \left\{ \ba{ll} Z_{ii}, & \mbox{ if } \ i=j\\
Z_{ij} - \tau, & \mbox{ if } \ i \ne j \ \mbox{ and} \ Z_{ij} > \tau \\
                                                                    Z_{ij} + \tau, & \mbox{ if } \ i \ne j \ \mbox{ and} \ Z_{ij} < -\tau \\
                                                                    0           , & \mbox{ if } \ i \ne j \ \mbox{ and} \ -\tau \leq Z_{ij}\leq \tau. \ea\right.
\ee
\end{remark}

\section{Numerical experiments}\label{sec:numerical}

In this section, we present numerical results on both synthetic and real data to demonstrate the
efficiency of the proposed methods: ADMM \eqref{admm-2-blocks} and PGADM (Algorithm \ref{alg:pgadm}). Our codes were written in MATLAB. All numerical
experiments were run in MATLAB 7.12.0 on a laptop with Intel Core
I5 2.5 GHz CPU and 4GB of RAM.

We first compared ADMM \eqref{admm-2-blocks} with PGADM (Algorithm \ref{alg:pgadm}) on some synthetic problems.
We compared ADMM and PGADM using two different ways of choosing $\mu$. One set of comparisons used a fixed $\mu=10$, and the other set of comparisons used a continuation scheme to dynamically change $\mu$. The continuation scheme we used was to set the initial value of $\mu$ as the size of the matrix $p$, and then multiply $\mu$ by $1/4$ after every 10 iterations.

We then compared the performance of PGADM (with continuation on $\mu$) with LogdetPPA proposed by \cite{Wang-Sun-Toh-2009} and used in \cite{Chandrasekaran-Parrilo-Willsky-2010} for solving \eqref{prob:latent-variable}.

\subsection{Comparison of ADMM and PGADM on Synthetic Data}

We observed form the numerical experiments that the step size $\tau$ of the proximal gradient step in PGADM (Algorithm \ref{alg:pgadm}) can be slightly larger than $1/2$ and the algorithm produced very good results. We thus chose the step size $\tau$ to be $0.6$ in our experiments.

We randomly created test problems using a procedure proposed by \cite{Scheinberg-Rish-2009} and \cite{Scheinberg-Ma-Goldfarb-NIPS-2010} for the classical graphical lasso problems. Similar procedures were used by \cite{Wang-Sun-Toh-2009} and \cite{Li-Toh-2010}.
For a given number of observed variables $p$ and a given number of latent variables $p_h$, we first created a sparse matrix $U\in \br^{(p+p_h)\times (p+p_h)}$ with sparsity around 10\%, i.e., 10\% of the entries are nonzeros. The nonzero entries were set to -1 or 1 with equal probability. Then we computed $K:=(U*U^\top)^{-1}$ as the true covariance matrix. We then chose the submatrix of $K$, $\hat{S}:=K(1:p,1:p)$ as the ground truth matrix of the sparse matrix $S$ and chose $\hat{L}:=K(1:p,p+1:p+p_h)K(p+1:p+p_h,p+1:p+p_h)^{-1}K(p+1:p+p_h,1:p)$ as the ground truth matrix of the low rank matrix $L$.  We then drew $N=5p$ iid vectors, $Y_1,\ldots,Y_N$, from the Gaussian distribution $\mathcal{N}(\mathbf{0}, (\hat{S}-\hat{L})^{-1})$ by using the $mvnrnd$ function in MATLAB, and computed a sample covariance matrix of the observed variables $\Sigma_X:=\frac{1}{N}\sum_{i=1}^N Y_iY_i^\top.$

We computed the relative infeasibility of the sequence $(R^k,S^k,L^k)$ generated by inexact ADMM using
\be \label{infeas-ADMM} \mbox{infeas} := \frac{\|R^k-S^k+L^k\|_F}{\max\{1,\|R^k\|_F,\|S^k\|_F,\|L^k\|_F\}}. \ee

In the comparison of ADMM and PGADM, the size of all problems was chosen as $p=1000$. For fixed $\mu=10$, we first ran the ADMM for 100 iterations, and recorded the objective function value and $infeas$. We then ran PGADM until it achieves an objective function value within relative error $10^{-5}$ compared with the objective function value given by ADMM, or it achieves an $infeas$ within relative error $10^{-5}$ compared with the $infeas$ given by ADMM. The number of iterations, CPU times, $infeas$ and objective function values for both ADMM and PGADM were reported in Table \ref{tab:synthetic-boyd-1000-fixed-mu}.
From Table \ref{tab:synthetic-boyd-1000-fixed-mu}, we see that for fixed $\mu=10$, ADMM was faster than PGADM when $\alpha$ and $\beta$ are both small, and PGADM was faster than ADMM when $\alpha$ and $\beta$ are both large.

\begin{table}[ht]{
\begin{center}\caption{Comparison of ADMM with PGADM (for fixed $\mu$) on synthetic data}\label{tab:synthetic-boyd-1000-fixed-mu}
\vspace{0.1in}
\begin{tabular}{|cc||c|c|c|c||c|c|c|c|}\hline
\multicolumn{2}{|c|}{} & \multicolumn{4}{|c||}{PGADM} & \multicolumn{4}{|c|}{ADMM} \\\hline

$\alpha$ & $\beta$ & obj           & iter  & cpu   & infeas    & obj          & iter & cpu    & infeas    \\\hline

0.005    & 0.025     & -1.6987e+002  & 135   & 224.1 & 7.6e-005 & -1.7098e+002 & 100  & 172.9  & 6.6e-005 \\\hline

0.005    & 0.05      & -9.3385e+001  & 146   & 243.7 & 3.8e-005 & -9.3476e+001 & 100  & 173.7  & 2.9e-005 \\\hline

0.01     & 0.05      & -4.4748e+001  & 132   & 214.7 & 1.7e-005 & -4.4761e+001 & 100  & 180.1  & 7.0e-006 \\\hline

0.01     & 0.1       & 5.4571e+001   & 111   & 177.4 & 9.8e-006 & 5.4567e+001  & 100  & 166.9  & 2.8e-007  \\\hline

0.02     & 0.1       & 1.1881e+002   & 83    & 136.8 & 9.7e-006 & 1.1881e+002  & 100  & 170.1  & 1.9e-007  \\\hline

0.02     & 0.2       & 2.3717e+002   & 56    & 91.5  & 9.3e-006 & 2.3717e+002  & 100  & 174.0  & 1.6e-007  \\\hline

0.04     & 0.2       & 3.2417e+002   & 44    & 67.0  & 9.2e-006 & 3.2417e+002  & 100  & 165.2  & 1.6e-007  \\\hline

0.04     & 0.4       & 4.5701e+002   & 19    & 29.4  & 3.1e-004 & 4.5700e+002  & 100  & 168.4  & 1.7e-007    \\\hline

\end{tabular}
\end{center}}
\end{table}

We then further compare ADMM and PGADM on synthetic data with the continuation scheme for $\mu$ discussed above. We terminated both ADMM and PGADM when $infeas<10^{-5}$. We reported the results in Table \ref{tab:synthetic-boyd-1000-continuation-mu}.

From Table \ref{tab:synthetic-boyd-1000-continuation-mu}, we see that the continuation scheme used really helped to speed up the convergence and produced much better results. Also, using this continuation scheme, PGADM was faster than ADMM with comparable residuals and objective function values. However, we should remark that PGADM was faster than ADMM using the specific continuation scheme. If other continuation schemes were adopted, the results could be quite different. In the comparison with LogdetPPA in the following sections, we only compare LogdetPPA with PGADM with this continuation scheme.

\begin{table}[ht]{
\begin{center}\caption{Comparison of ADMM with PGADM (with continuation for $\mu$) on synthetic data}\label{tab:synthetic-boyd-1000-continuation-mu}
\vspace{0.1in}
\begin{tabular}{|cc||c|c|c|c||c|c|c|c|}\hline
\multicolumn{2}{|c|}{} & \multicolumn{4}{|c||}{PGADM} & \multicolumn{4}{|c|}{ADMM} \\\hline

$\alpha$ & $\beta$ & obj           & iter  & cpu   & resid    & obj          & iter & cpu    & resid    \\\hline


0.005 & 0.025 & -1.711329e+002 & 32 & 51.5 & 5.7e-006 & -1.711334e+002 & 61 & 103.7 & 6.7e-006     \\\hline

0.005 & 0.05  & -9.348245e+001 & 41 & 65.8 & 4.2e-006 & -9.348589e+001 & 62 & 103.2 & 6.6e-006     \\\hline

0.010 & 0.05  & -4.476323e+001 & 41 & 67.8 & 2.8e-006 & -4.476499e+001 & 62 & 105.7 & 9.8e-006     \\\hline

0.010 & 0.10  & 5.456790e+001 & 41 & 65.7 & 5.1e-006 & 5.456724e+001 & 71 & 119.6 & 6.0e-006     \\\hline

0.020 & 0.10  & 1.188077e+002 & 41 & 64.9 & 3.4e-006 & 1.188054e+002 & 71 & 115.1 & 8.1e-006     \\\hline

0.020 & 0.20  & 2.371659e+002 & 45 & 76.4 & 8.7e-006 & 2.371687e+002 & 75 & 125.6 & 7.0e-006     \\\hline

0.040 & 0.20  & 3.241688e+002 & 44 & 72.7 & 8.3e-006 & 3.241684e+002 & 75 & 126.6 & 8.5e-006     \\\hline

0.040 & 0.40  & 4.570019e+002 & 50 & 77.5 & 7.5e-006 & 4.570058e+002 & 78 & 126.5 & 9.7e-006     \\\hline

\end{tabular}
\end{center}}
\end{table}

\subsection{Comparison of PGADM and LogdetPPA on Synthetic Data}

In this section, we compare PGADM with LogdetPPA on synthetic data created the same way as in the last section.
LogdetPPA, proposed by Wang \etal in \cite{Wang-Sun-Toh-2009}, is a proximal point algorithm for solving semidefinite programming problems with $\log\det(\cdot)$ function. The specialized MATLAB codes of LogdetPPA for solving \eqref{prob:latent-variable} were downloaded from http://ssg.mit.edu/$\sim$venkatc/latent-variable-code.html.

We compared PGADM (with continuation on $\mu$) with LogdetPPA with different $\alpha$ and $\beta$. We reported the comparison results on objective function value, CPU time, sparsity of $S$ and infeas in Table \ref{tab:synthetic}. The sparsity of $S$ is denoted as
\[sp := \frac{\#\{(i,j):S_{ij}\neq 0\}}{p^2},\] i.e., the percentage of nonzero entries. Since matrix $S$ generated by LogdetPPA is always dense but with many small entries, we also measure its sparsity by truncating small entries that less than $10^{-4}$ to zeros, i.e.,
\[sp1 := \frac{\#\{(i,j):|S_{ij}| > 10^{-4}\}}{p^2}.\]
All CPU times reported are in seconds. We report the speed up of PGADM over LogdetPPA in Table \ref{tab:synthetic:speedup}.

\begin{table}[ht]{
\begin{center}\caption{Results of PGADM and LogdetPPA on synthetic data}\label{tab:synthetic}
\vspace{0.1in}
\begin{tabular}{|c||c|r|c|c||c|r|c|c|}\hline
dim & \multicolumn{4}{|c||}{LogdetPPA} & \multicolumn{4}{|c|}{PGADM} \\\hline
p   & obj & cpu & sp (\%) & sp1 (\%)           & obj & cpu & sp (\%) & infeas \\\hline
\multicolumn{9}{|c|}{$\alpha=0.005$, $\beta=0.025$}               \\\hline
200 & 1.914315e+2 & 7.2 & 100.00 & 19.17 & 1.910379e+2 & 1.0 & 18.90 & 4.3e-6 \\\hline
500 & 1.898418e+2 & 235.2 & 100.00 & 5.78 & 1.898275e+2 & 7.3 & 5.63 & 7.2e-6 \\\hline
1000 & -1.711293e+2 & 1706.0 & 100.00 & 0.52 & -1.711329e+2 & 48.2 & 0.49 & 5.7e-6  \\\hline
2000 & -1.430010e+3 & 5001.2 & 100.00 & 0.06 & -1.435605e+3 & 427.2 & 0.05 & 4.9e-6 \\\hline
\multicolumn{9}{|c|}{$\alpha=0.005$, $\beta=0.05$}               \\\hline
200 & 1.926376e+2 & 29.1 & 100.00 & 43.63 & 1.924829e+2 & 2.5 & 48.66 & 6.6e-6 \\\hline
500 & 2.051884e+2 & 358.3 & 100.00 & 12.04 & 2.051425e+2 & 10.5 & 11.42 & 8.8e-6  \\\hline
1000 & -9.347297e+1 & 1076.4 & 100.00 & 4.88 & -9.348245e+1 & 71.6 & 4.72 & 4.2e-6  \\\hline
2000 & -1.229323e+3 & 5191.5 & 100.00 & 0.21 & -1.230238e+3 & 445.0 & 0.15 & 9.5e-6  \\\hline
\multicolumn{9}{|c|}{$\alpha=0.01$, $\beta=0.05$}               \\\hline
200 & 2.030390e+2 & 20.8 & 100.00 & 20.46 & 2.026586e+2 & 2.6 & 11.06 & 9.0e-6 \\\hline
500 & 2.394720e+2 & 146.0 & 100.00 & 4.25 & 2.394631e+2 & 10.7 & 4.14 & 3.7e-6 \\\hline
1000 & -4.476078e+1 & 740.6 & 100.00 & 0.24 & -4.476323e+1 & 72.8 & 0.23 & 2.8e-6  \\\hline
2000 & -1.101454e+3 & 6433.4 & 100.00 & 0.05 & -1.111504e+3 & 453.7 & 0.05 & 6.4e-6   \\\hline
\multicolumn{9}{|c|}{$\alpha=0.01$, $\beta=0.1$}               \\\hline
200 & 2.050879e+2 & 29.1 & 100.00 & 42.16 & 2.048359e+2 & 1.6 & 35.83 & 7.0e-6 \\\hline
500 & 2.639548e+2 & 235.2 & 100.00 & 8.62 & 2.638565e+2 & 11.6 & 8.19 & 5.8e-6  \\\hline
1000 & 5.456825e+1 & 932.1 & 100.00 & 2.26 & 5.456790e+1 & 74.9 & 2.26 & 5.1e-6   \\\hline
2000 & -8.541802e+2 & 4813.6 & 100.00 & 0.14 & -8.712916e+2 & 516.6 & 0.07 & 8.6e-6 \\\hline
\end{tabular}
\end{center}}
\end{table}

From Table \ref{tab:synthetic} we see that the solutions produced by PGADM always have comparable objective function values compared to the solutions produced by LogdetPPA. However, our PGADM is always much faster than LogdetPPA, as shown in both Tables \ref{tab:synthetic} and \ref{tab:synthetic:speedup}. In fact, PGADM is usually ten times faster than LogdetPPA, and sometimes more than thirty five times faster. For example, for the four large problems with matrices size $2000\times 2000$, LogdetPPA needs 1 hour 23 minutes, 1 hour 26 minutes, 1 hour 47 minutes and 1 hour 20 minutes, respectively, to solve them, while our PGADM needs about 7 minutes, 7 minutes, 8 minutes and 9 minutes respectively to solve them. We also notice that the matrix $S$ generated by PGADM is always a sparse matrix with many entries exactly equal to zero, but $S$ generated by LogdetPPA is always a dense matrix, and only when we truncate the entries that are smaller than $10^{-4}$ to zeros, it becomes a sparse matrix with similar level of sparsity. This is because in our PGADM, $S$ is updated by the $\ell_1$ shrinkage operation, which truncates the small entries to zeros, while LogdetPPA needs to replace $\|S\|_1$ with smooth linear function which does not preserve sparsity.

\begin{table}[ht]{
\begin{center}\caption{Speed up of PGADM over LogdetPPA on synthetic data.}\label{tab:synthetic:speedup}
\vspace{0.1in}
\begin{tabular}{|c||c|c|c|c|}\hline
p   & $\alpha=0.005$, $\beta=0.025$ & $\alpha=0.005$, $\beta=0.05$ & $\alpha=0.01$, $\beta=0.05$ & $\alpha=0.01$, $\beta=0.1$ \\\hline
200 &  7.1  & 11.7  & 7.9 & 18.7 \\\hline
500 & 32.0  & 34.2  & 13.6 & 20.3 \\\hline
1000 & 35.4 & 15.0 & 10.2 & 12.4 \\\hline
2000 & 11.7 & 11.7 & 14.2  & 9.3   \\\hline
\end{tabular}
\end{center}}
\end{table}

\subsection{Comparison of PGADM and LogdetPPA on Gene expression data}

To further demonstrate the efficacy of PGADM, we applied PGADM to solving \eqref{prob:latent-variable} with two gene expression data sets. One data set is the Rosetta Inpharmatics Compendium of gene expression data (denoted as Rosetta) \cite{Hughes-Rosetta-2000} profiles
which contains $301$ samples with $6316$ variables (genes).
The other data set is the Iconix microarray data set (denoted as Iconix) from drug
treated rat livers \cite{Natsoulis-GSE8858-2005} which contains $255$ samples with $10455$ variables.

For a given number of observed variables $p$, we created the sample covariance matrix $\Sigma_X$ by the following procedure. We first computed the variances of all of variables using all the sample data. We then selected the $p$ variables with the highest variances and computed the sample covariance matrix $\Sigma_X$ of these $p$ variables using all the sample data. We reported the comparison results of PGADM and LogdetPPA in Tables \ref{tab:Rosetta} and \ref{tab:Iconix} for the Rosetta and Iconix data sets, respectively. Table \ref{tab:real:speedup} summarizes the speed up of PGADM over LogdetPPA.

\begin{table}[ht]{
\begin{center}\caption{Results of PGADM and LogdetPPA on Rosetta data set}\label{tab:Rosetta}
\vspace{0.1in}
\begin{tabular}{|c||c|r|c|c||c|r|c|c|}\hline
dim & \multicolumn{4}{|c||}{LogdetPPA} & \multicolumn{4}{|c|}{PGADM} \\\hline
p   & obj & cpu & sp (\%) & sp1 (\%)           & obj & cpu & sp (\%) & infeas \\\hline
200 & -2.726188e+2 & 5.6 & 100.00 & 0.58 & -2.726184e+2 & 0.7 & 0.58 & 7.1e-6  \\\hline
500 & -8.662116e+2 & 68.0 & 100.00 & 0.20 & -8.662113e+2 & 7.9 & 0.20 & 3.7e-6  \\\hline
1000 & -1.970974e+3 & 490.9 & 100.00 & 0.10 & -1.970973e+3 & 52.2 & 0.10 & 7.7e-6  \\\hline
2000 & -4.288406e+3 & 4597.9 & 100.00 & 0.05 & -4.288406e+3 & 422.3 & 0.05 & 5.4e-6 \\\hline
\end{tabular}
\end{center}}
\end{table}

\begin{table}[ht]{
\begin{center}\caption{Results of PGADM and LogdetPPA on Iconix data set}\label{tab:Iconix}
\vspace{0.1in}
\begin{tabular}{|c||c|r|c|c||c|r|c|c|}\hline
dim & \multicolumn{4}{|c||}{LogdetPPA} & \multicolumn{4}{|c|}{PGADM}  \\\hline
p   & obj & cpu & sp (\%) & sp1  (\%)          & obj & cpu & sp (\%) & infeas  \\\hline
200 & 1.232884e+3 & 38.5 & 100.00 & 36.10 & 1.232744e+3 & 2.5 & 41.62 & 7.2e-6  \\\hline
500 & 1.842623e+3 & 98.8 & 100.00 & 2.27 & 1.839838e+3 & 17.0 & 0.77 & 1.0e-5  \\\hline
1000 & 1.439052e+3 & 1341.9 & 100.00 & 1.45 & 1.435425e+3 & 94.6 & 0.14 & 6.3e-6  \\\hline
2000 & 1.242966e+2 & 13207.2 & 100.00 & 0.07 & 1.168757e+2 & 738.2 & 0.06 & 8.5e-6 \\\hline
\end{tabular}
\end{center}}
\end{table}

From Table \ref{tab:Rosetta}, we again see that PGADM always generates solutions with comparable objective function values in much less time. For example, for $p=2000$, LogdetPPA needs 1 hour 16 minutes to solve it while PGADM takes just 7 minutes. From Table \ref{tab:Iconix}, we see that the advantage of PGADM is more obvious. For $p=200,500,1000$ and $2000$, PGADM always generates solutions with much smaller objective function values and it is always much faster than LogdetPPA. For example, for $p=2000$, LogdetPPA takes 3 hours 40 minutes to solve it while PGADM just takes about 12 minutes.

\begin{table}[ht]{
\begin{center}\caption{Speed up of PGADM over LogdetPPA on Rosetta and Iconix data sets.}\label{tab:real:speedup}
\vspace{0.1in}
\begin{tabular}{|c||c|c|}\hline
dim & Rosetta data & Iconix data\\\hline
200 &  8.0  & 15.4\\\hline
500 & 8.6& 5.8  \\\hline
1000 &9.4 & 14.2\\\hline
2000 & 10.9 & 17.9 \\\hline
\end{tabular}
\end{center}}
\end{table}

\section{Conclusion}\label{sec:conclusion}

In this paper, we proposed alternating direction methods for solving latent variable Gaussian graphical model selection. The global convergence results of our methods were established. We applied the proposed methods for solving large problems from both synthetic data and gene expression data. The numerical results indicated that our methods were five to thirty five times faster than a state-of-the-art Newton-CG proximal point algorithm.

\section*{Acknowledgement} The authors thank Professor Stephen Boyd for suggesting solving \eqref{prob:latent-variable} as a consensus problem \eqref{prob:latent-variable-2-blocks} using ADMM. Shiqian Ma's research is supported by the National Science Foundation postdoctoral fellowship through the Institute for Mathematics and Its Applications at University of Minnesota. Hui Zou's research is supported in part by grants from the National Science Foundation and the Office of Naval Research.

\bibliographystyle{siam}
\bibliography{C:/Mywork/Optimization/work/reports/bibfiles/All,precision}

\appendix
\section{Global Convergence Analysis of PGADM}\label{sec:convergence}

In this section, we establish the global convergence result of PGADM (Algorithm \ref{alg:pgadm}). This convergence proof is not much different with the one given by \cite{Yang-Zhang-2009} for compressed sensing problems. We include the proof here just for completeness.

We introduce some notation first. We define $W=\begin{pmatrix} S\\ L\end{pmatrix}$. We define functions $F(\cdot)$ and $G(\cdot)$ as
\[ F(R) := \langle R,\hat\Sigma_X\rangle - \log\det R, \]
and
\[ G(W) := \alpha\|S\|_1 + \beta \Tr(L) + \mathcal{I}(L\succeq 0).\]
Note that both $F$ and $G$ are convex functions.
We also define matrix $A$ as $A=[-I_{p\times p}, I_{p\times p}]\in\br^{p\times 2p}$. Now Problem \eqref{prob:latent-variable} can be rewritten as
\be \label{prob:generic-min-sum-2} \min\  F(R) + G(W), \quad \st \ R + AW = 0, \ee and our PGADM (Algorithm \ref{alg:pgadm}) can be rewritten as
\be \label{alg:ADM-generic}\left\{\ba{lll} R^{k+1} & := & \argmin_R \ F(R) + G(W^k) - \langle \Lambda^k, R+AW^k \rangle + \frac{1}{2\mu}\|R+AW^k\|_F^2 \\
                                    W^{k+1} & := & \argmin_W \ F(R^{k+1}) + G(W) + \frac{1}{2\tau\mu}\|W-\left(W^k-\tau A^\top(R^{k+1}+AW^k-\mu\Lambda^k)\right)\|_F^2 \\
                                    \Lambda^{k+1} & := & \Lambda^k - (R^{k+1}+AW^{k+1})/\mu. \ea\right.\ee

Before we prove the global convergence result, we need to prove the following lemma.

\begin{lemma}\label{lem:Fejer-monotone}
Assume that $(R^*,W^*)$ is an optimal solution of \eqref{prob:generic-min-sum-2} and $\Lambda^*$ is the corresponding optimal dual variable associated with the equality constraint $R+AW=0$. Assume the step size $\tau$ of the proximal gradient step satisfies $0<\tau<1/2$. Then there exists $\eta>0$ such that the sequence $(R^k,W^k,\Lambda^k)$ produced by \eqref{alg:ADM-generic} satisfies
\be \label{lem:conclusion-eq} \|U^{k}-U^*\|_H^2 - \|U^{k+1}-U^*\|_H^2 \geq \eta\|U^k - U^{k+1}\|_H^2, \ee where $U^* = \begin{pmatrix} W^* \\ \Lambda^* \end{pmatrix}$, $U^k = \begin{pmatrix} W^k \\ \Lambda^k \end{pmatrix}$ and $H = \begin{pmatrix} \frac{1}{\mu\tau} I_{p\times p} & 0 \\ 0 & \mu I_{p\times p} \end{pmatrix}$, and the norm $\|\cdot\|_H^2$ is defined as $\|U\|_H^2 = \langle U, HU\rangle$ and the corresponding inner product $\langle \cdot,\cdot\rangle_H$ is defined as $\langle U,V\rangle_H = \langle U,HV\rangle$.
\end{lemma}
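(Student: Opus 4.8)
The plan is to run the standard monotone-operator (variational-inequality) argument for an alternating direction method in which one block is updated by a single proximal-gradient step, isolating at the end the one place where the restriction $\tau<1/2$ is actually used.

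\emph{Step 1: optimality conditions.} First I would write the first-order optimality conditions for the $R^{k+1}$- and $W^{k+1}$-subproblems in \eqref{alg:ADM-generic} and use the multiplier update $\Lambda^{k+1}=\Lambda^k-(R^{k+1}+AW^{k+1})/\mu$ to re-express the gradients of the quadratic penalties in terms of $\Lambda^{k+1}$ and the successive difference $D^k:=W^k-W^{k+1}$. This should give
\[
\Lambda^{k+1}-\tfrac{1}{\mu}A D^k\in\partial F(R^{k+1}),\qquad A^{\top}\Lambda^{k+1}+\tfrac{1}{\mu\tau}D^k-\tfrac{1}{\mu}A^{\top}A\,D^k\in\partial G(W^{k+1}),
\]
while at the optimal point $\Lambda^{*}\in\partial F(R^{*})$, $A^{\top}\Lambda^{*}\in\partial G(W^{*})$ and $R^{*}+AW^{*}=0$.

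\emph{Step 2: monotonicity, cancellations, polarization.} Pairing the first inclusion with $R^{k+1}-R^{*}$ and the second with $W^{k+1}-W^{*}$, monotonicity of the convex subdifferentials $\partial F$ and $\partial G$ yields two inequalities, which I add. Using $R^{k+1}+AW^{k+1}=\mu(\Lambda^k-\Lambda^{k+1})$ and $R^{*}+AW^{*}=0$, the pairing $\langle R^{k+1}-R^{*},\Lambda^{k+1}-\Lambda^{*}\rangle+\langle W^{k+1}-W^{*},A^{\top}(\Lambda^{k+1}-\Lambda^{*})\rangle$ collapses to $\mu\langle\Lambda^k-\Lambda^{k+1},\Lambda^{k+1}-\Lambda^{*}\rangle$, and after substituting $R^{k+1}-R^{*}=\mu(\Lambda^k-\Lambda^{k+1})-A(W^{k+1}-W^{*})$ the two occurrences of $\tfrac{1}{\mu}\langle W^{k+1}-W^{*},A^{\top}A\,D^k\rangle$ cancel; it is exactly this cancellation (leaving behind no stray $\|R^{k+1}-R^{*}\|$-type term) that makes everything fit inside the $H$-norm. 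Recognizing the surviving left-hand side as $\langle U^{k+1}-U^{*},U^k-U^{k+1}\rangle_H$ and using $2\langle U^{k+1}-U^{*},U^k-U^{k+1}\rangle_H=\|U^k-U^{*}\|_H^2-\|U^{k+1}-U^{*}\|_H^2-\|U^k-U^{k+1}\|_H^2$, I expect to reach
\[
\|U^k-U^{*}\|_H^2-\|U^{k+1}-U^{*}\|_H^2\ \ge\ \|U^k-U^{k+1}\|_H^2+2\langle\Lambda^k-\Lambda^{k+1},A D^k\rangle.
\]

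\emph{Step 3: where $\tau<1/2$ enters, and the main obstacle.} It remains to absorb the cross term, i.e.\ to show $\|U^k-U^{k+1}\|_H^2+2\langle\Lambda^k-\Lambda^{k+1},A D^k\rangle\ge\eta\|U^k-U^{k+1}\|_H^2$ for some $\eta>0$. Since $A=[-I,\ I]$ satisfies $\|A\|^2=2$, Young's inequality with a free parameter $\epsilon>0$ bounds the cross term below by $-\epsilon\mu\|\Lambda^k-\Lambda^{k+1}\|_F^2-\tfrac{2}{\epsilon\mu}\|D^k\|_F^2$; together with $\|U^k-U^{k+1}\|_H^2=\tfrac{1}{\mu\tau}\|D^k\|_F^2+\mu\|\Lambda^k-\Lambda^{k+1}\|_F^2$, the left-hand side is at least $\tfrac{1}{\mu}\bigl(\tfrac{1}{\tau}-\tfrac{2}{\epsilon}\bigr)\|D^k\|_F^2+(1-\epsilon)\mu\|\Lambda^k-\Lambda^{k+1}\|_F^2$. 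Choosing $\epsilon=\sqrt{2\tau}$ makes this exactly $\eta\|U^k-U^{k+1}\|_H^2$ with $\eta=1-\sqrt{2\tau}$, which is positive precisely when $\tau<1/2$, establishing \eqref{lem:conclusion-eq} (any smaller positive $\eta$ works too). The genuinely delicate point — and the reason the hypothesis is $0<\tau<1/2$ — is this last estimate: the proximal-gradient step injects the indefinite-looking term $2\langle\Lambda^k-\Lambda^{k+1},AD^k\rangle$, so one must divide $\|U^k-U^{k+1}\|_H^2$ between its $W$- and $\Lambda$-blocks in just the right proportion for the residual quadratic form to stay positive definite, which is possible if and only if $1/\tau>\|A\|^2=2$. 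All the rest is routine convex-analysis bookkeeping, the only subtlety there being to track the cancellations in Step 2 carefully.
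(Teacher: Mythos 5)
Your proposal is correct and follows essentially the same route as the paper's proof: the same subproblem optimality conditions rewritten via the multiplier update, the same monotonicity-plus-cancellation step yielding $\langle U^{k+1}-U^*,U^k-U^{k+1}\rangle_H \geq \langle \Lambda^k-\Lambda^{k+1}, A(W^k-W^{k+1})\rangle$, and the same absorption of the cross term using $\lambda_{\max}(A^\top A)=2$. The only (immaterial) difference is the choice of the Young-inequality parameter: you take $\epsilon=\sqrt{2\tau}$, giving the sharper constant $\eta=1-\sqrt{2\tau}$, whereas the paper takes $\rho=\mu(\tau+1/2)$ and gets $\eta=\min\{1-\tfrac{2\tau}{\tau+1/2},\,\tfrac12-\tau\}$; both are positive exactly when $0<\tau<1/2$.
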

\begin{proof}
Since $(R^*,W^*,\Lambda^*)$ is optimal to \eqref{prob:generic-min-sum-2}, it follows from the KKT conditions that the followings hold:
\be \label{lem-KKT-x} 0 \in \partial F(R^*) - \Lambda^*, \ee
\be \label{lem-KKT-y} 0 \in \partial G(W^*) - A^\top\Lambda^*, \ee
and
\be \label{lem-KKT-feas} 0 = R^*+AW^*.\ee

Note that the first-order optimality conditions for the first subproblem (i.e., the subproblem with respect to $R$) in \eqref{alg:ADM-generic}
are given by
\be \label{lem-OPTcond-x} 0\in \partial F(R^{k+1})-\Lambda^k+\frac{1}{\mu}(R^{k+1}+AW^k-0). \ee
By using the updating formula for $\Lambda^k$, i.e.,
\be\label{update-lambda} \Lambda^{k+1} = \Lambda^k - (R^{k+1}+AW^{k+1})/\mu, \ee
\eqref{lem-OPTcond-x} can be reduced to
\be \label{lem-OPTcond-x-reduced} 0 \in \partial F(R^{k+1}) -\Lambda^{k+1} - \frac{1}{\mu}(AW^{k+1}-AW^k).\ee
Combining \eqref{lem-KKT-x} and \eqref{lem-OPTcond-x-reduced} and using the fact that $\partial F(\cdot)$ is a monotone operator, we get
\be\label{lem-OPTcond-x-final} \langle R^{k+1}-R^*, \Lambda^{k+1}-\Lambda^*+\frac{1}{\mu}(AW^{k+1}-AW^k) \rangle \geq 0. \ee

The first-order optimality conditions for the second subproblem (i.e., the subproblem with respect to $W$) in \eqref{alg:ADM-generic} are given by
\be \label{lem-OPTcond-y} 0\in\partial G(W^{k+1}) + \frac{1}{\mu\tau}(W^{k+1}-(W^k-\tau A^\top(AW^k+R^{k+1}-\mu\Lambda^k))). \ee
Using \eqref{update-lambda}, \eqref{lem-OPTcond-y} can be reduced to
\be \label{lem-OPTcond-y-reduced} 0\in \partial G(W^{k+1})+\frac{1}{\mu\tau}(W^{k+1}-W^k+\tau A^{\top}(AW^k-AW^{k+1}-\mu\Lambda^{k+1})). \ee
Combining \eqref{lem-KKT-y} and \eqref{lem-OPTcond-y-reduced} and using the fact that $\partial G(\cdot)$ is a monotone operator, we get
\be \label{lem-OPTcond-y-final} \langle W^{k+1}-W^*, \frac{1}{\mu\tau}(W^k-W^{k+1})-\frac{1}{\mu}A^\top(AW^k-AW^{k+1})+A^\top\Lambda^{k+1}-A^\top\Lambda^* \rangle\geq 0.\ee

Summing \eqref{lem-OPTcond-x-final} and \eqref{lem-OPTcond-y-final}, and using $R^*=-AW^*$ and $R^{k+1}=\mu(\Lambda^k-\Lambda^{k+1})-AW^{k+1}$, we obtain,
\be \label{lem-proof-inequa-1}
\frac{1}{\mu\tau}\langle W^{k+1}-W^*,W^k-W^{k+1} \rangle + \mu\langle\Lambda^{k+1}-\Lambda^*,\Lambda^k-\Lambda^{k+1} \rangle \geq\langle \Lambda^k-\Lambda^{k+1},AW^k-AW^{k+1}\rangle.
\ee
Using the notation of $U^k$, $U^*$ and $H$, \eqref{lem-proof-inequa-1} can be rewritten as
\be\label{lem-proof-inequa-2}
\langle U^{k+1}-U^*,U^k - U^{k+1} \rangle_H \geq \langle\Lambda^k-\Lambda^{k+1},AW^k-AW^{k+1}\rangle,\ee which can be further written as
\be\label{lem-proof-inequa-3} \langle U^k-U^*,U^k - U^{k+1} \rangle_H \geq \|U^k-U^{k+1}\|_H+\langle\Lambda^k-\Lambda^{k+1},AW^k-AW^{k+1}\rangle.\ee
Combining \eqref{lem-proof-inequa-3}
with the identity \[\|U^{k+1}-U^*\|_H^2=\|U^{k+1}-U^k\|_H^2-2\langle U^{k}-U^{k+1}, U^k-U^*\rangle_H+\|U^k-U^*\|_H^2,\]
we get \be \label{lem-proof-inequa-4}
\begin{array}{ll} & \|U^k-U^*\|_H^2 - \|U^{k+1}-U^*\|_H^2 \\ = & 2\langle U^k-U^{k+1},U^k-U^*\rangle_H-\|U^{k+1}-U^k\|_H^2 \\
\geq & \|U^{k+1}-U^k\|_H^2 + 2\langle\Lambda^k-\Lambda^{k+1},AW^k-AW^{k+1}\rangle.\end{array} \ee

Let $\xi:=\tau+1/2$, then we know that $2\tau < \xi<1$ since $0<\tau<1/2$. Let $\rho:=\mu\xi$. Then from Cauchy-Schwartz inequality we have
\be\label{lem-proof-inequa-5} \ba{lll} 2\langle\Lambda^k-\Lambda^{k+1},AW^k-AW^{k+1}\rangle & \geq &  -\rho\|\Lambda^k-\Lambda^{k+1}\|^2-\frac{1}{\rho}\|AW^k-AW^{k+1}\|^2 \\ & \geq & -\rho\|\Lambda^k-\Lambda^{k+1}\|^2-\frac{1}{\rho}\lambda_{\max}(A^\top A)\|W^k-W^{k+1}\|^2 \\ & = & -\rho\|\Lambda^k-\Lambda^{k+1}\|^2-\frac{2}{\rho}\|W^k-W^{k+1}\|^2,\ea\ee where the $\lambda_{\max}(A^\top A)$ denotes the largest eigenvalue of matrix $A^\top A$ and the equality is due to the fact that $\lambda_{\max}(A^\top A)=2$.
Combining \eqref{lem-proof-inequa-4} and \eqref{lem-proof-inequa-5} we get
\be\label{lem-proof-inequa-6} \ba{lll}\|U^k-U^*\|_H^2 - \|U^{k+1}-U^*\|_H^2 & \geq & (\frac{1}{\mu\tau}-\frac{2}{\rho})\|W^k-W^{k+1}\|^2+(\mu-\rho)\|\Lambda^k-\Lambda^{k+1}\|^2 \\ & \geq & \eta\|U^k-U^{k+1}\|_H^2,\ea\ee where $\eta:=\min\{1-\frac{2\mu\tau}{\rho},1-\frac{\rho}{\mu}\}=\min\{1-\frac{2\tau}{\xi},1-\xi\}>0$. This completes the proof.
\end{proof}

We are now ready to give the main convergence result of Algorithm \eqref{alg:ADM-generic}.
\begin{theorem}\label{the:main-convergence}
The sequence $\{(R^k,W^k,\Lambda^k)\}$ produced by Algorithm \eqref{alg:ADM-generic}
from any starting point converges to an optimal solution to Problem \eqref{prob:generic-min-sum-2}.
\end{theorem}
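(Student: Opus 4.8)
The plan is to use Lemma \ref{lem:Fejer-monotone} as the engine and run the standard Fej\'er-monotonicity argument. First I would observe that \eqref{lem:conclusion-eq} with $\eta>0$ immediately implies two things: the sequence $\{\|U^k-U^*\|_H\}$ is nonincreasing, hence $\{U^k\}=\{(W^k,\Lambda^k)\}$ is bounded; and, by telescoping the inequality over $k=0,1,\dots$, the series $\sum_k \|U^k-U^{k+1}\|_H^2$ converges, so $\|U^k-U^{k+1}\|_H\to 0$. Since $H$ is positive definite, this gives $W^k-W^{k+1}\to 0$ and $\Lambda^k-\Lambda^{k+1}\to 0$, and from the multiplier update \eqref{update-lambda} we then get $R^{k+1}+AW^{k+1}\to 0$, i.e. asymptotic feasibility; combined with $AW^k-AW^{k+1}\to 0$ this also forces $R^k-R^{k+1}\to 0$ and, using $R^{k+1}=\mu(\Lambda^k-\Lambda^{k+1})-AW^{k+1}$ together with boundedness of $W^k$, the boundedness of $\{R^k\}$.

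Next I would extract a convergent subsequence $(R^{k_j},W^{k_j},\Lambda^{k_j})\to(\bar R,\bar W,\bar\Lambda)$. The goal is to show $(\bar R,\bar W,\bar\Lambda)$ satisfies the KKT system \eqref{lem-KKT-x}--\eqref{lem-KKT-feas}. Feasibility $\bar R+A\bar W=0$ follows from asymptotic feasibility. For the stationarity conditions I would pass to the limit in the optimality conditions of the two subproblems, \eqref{lem-OPTcond-x-reduced} and \eqref{lem-OPTcond-y-reduced}: since $W^{k+1}-W^k\to 0$ and $\Lambda^{k+1}-\Lambda^k\to 0$, the perturbation terms vanish, and using closedness of the subdifferential graphs of the convex functions $F$ and $G$ (together with $R^{k_j+1}\to\bar R$, $W^{k_j+1}\to\bar W$, which hold because consecutive iterates differ by a null sequence) we conclude $\bar\Lambda\in\partial F(\bar R)$ and $A^\top\bar\Lambda\in\partial G(\bar W)$. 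Hence $(\bar R,\bar W)$ is optimal for \eqref{prob:generic-min-sum-2} and $\bar\Lambda$ is a corresponding optimal multiplier.

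Finally I would upgrade subsequential convergence to full convergence. The subtle point — and the one I expect to be the main obstacle — is that Lemma \ref{lem:Fejer-monotone} guarantees monotone decrease of $\|U^k-U^*\|_H$ only for a \emph{fixed} optimal pair $(W^*,\Lambda^*)$; a priori the limit point $(\bar W,\bar\Lambda)$ is just one particular optimal pair. The resolution is the classical one: $(\bar R,\bar W,\bar\Lambda)$ is itself an optimal primal-dual triple, so we may apply Lemma \ref{lem:Fejer-monotone} with $(W^*,\Lambda^*)=(\bar W,\bar\Lambda)$; then $\{\|U^k-\bar U\|_H\}$ is nonincreasing and has $0$ as a subsequential limit (along $k_j$), hence $\|U^k-\bar U\|_H\to 0$, i.e. $W^k\to\bar W$ and $\Lambda^k\to\bar\Lambda$. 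Then $R^k\to\bar R$ follows from $R^k-R^{k+1}\to0$ and $R^{k}=\mu(\Lambda^{k-1}-\Lambda^{k})-AW^{k}$, or more directly from the first subproblem's optimality condition and continuity. This gives convergence of the whole sequence to an optimal solution of \eqref{prob:generic-min-sum-2}.
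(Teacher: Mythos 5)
Your proposal is correct and follows essentially the same route as the paper: Fej\'er monotonicity from Lemma \ref{lem:Fejer-monotone}, vanishing successive differences and asymptotic feasibility, a subsequential limit shown to satisfy the KKT conditions \eqref{lem-KKT-x}--\eqref{lem-KKT-feas} by passing to the limit in \eqref{lem-OPTcond-x-reduced} and \eqref{lem-OPTcond-y-reduced}, and then an upgrade to convergence of the whole sequence. The only cosmetic difference is the final step: you re-apply the lemma with the limit point itself as the reference optimal primal--dual pair, whereas the paper shows any two limit points coincide via a norm identity --- the two arguments are equivalent.
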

\begin{proof}
From Lemma \ref{lem:Fejer-monotone} we can easily get that
\begin{itemize}
\item (i) $\|U^k-U^{k+1}\|_H \rightarrow 0$;
\item (ii) $\{U^k\}$ lies in a compact region;
\item (iii) $\|U^k-U^*\|_H^2$ is monotonically non-increasing and thus converges.
\end{itemize}
It follows from (i) that $\Lambda^k-\Lambda^{k+1}\rightarrow 0$ and $W^k-W^{k+1}\rightarrow 0$.
Then \eqref{update-lambda} implies that $R^k-R^{k+1}\rightarrow 0$ and $R^k+AW^k\rightarrow 0$.
From (ii) we obtain that, $U^k$ has a subsequence $\{U^{k_j}\}$ that converges to $\hat{U}=(\hat{W},\hat\Lambda)$,
i.e., $\Lambda^{k_j}\rightarrow\hat\Lambda$ and $W^{k_j}\rightarrow\hat{W}$. From $R^k+AW^k\rightarrow 0$ we also get
that $R^{k_j}\rightarrow\hat{R}:=-A\hat{W}$. Therefore, $(\hat{R},\hat{W},\hat{\Lambda})$ is a limit point of $\{(R^k,W^k,\Lambda^k)\}$.

Note that \eqref{lem-OPTcond-x-reduced} implies that
\be \label{the-convergence-inequa-2} 0 \in \partial F(\hat{R})-\hat{\Lambda}.\ee
Note also that \eqref{lem-OPTcond-y-reduced} implies that
\be \label{the-convergence-inequa-3} 0 \in \partial G(\hat{W}) - A^\top\hat\Lambda.\ee
\eqref{the-convergence-inequa-2}, \eqref{the-convergence-inequa-3} and $\hat{R}+A\hat{W}=0$
imply that $(\hat{R},\hat{W},\hat{\Lambda})$ satisfies the KKT conditions for \eqref{prob:generic-min-sum-2} and thus is an optimal
solution to \eqref{prob:generic-min-sum-2}.
Therefore, we showed that any limit point of $\{(R^k,W^k,\Lambda^k)\}$ is an optimal solution to \eqref{prob:generic-min-sum-2}.

To complete the proof, we need to show that the limit point is unique. Let $\{(\hat{R}_1,\hat{W}_1,\hat{\Lambda}_1)\}$ and $\{(\hat{R}_2,\hat{W}_2,\hat{\Lambda}_2)\}$ be any two limit points of $\{(R^k,W^k,\Lambda^k)\}$. As we have shown, both $\{(\hat{R}_1,\hat{W}_1,\hat{\Lambda}_1)\}$ and $\{(\hat{R}_2,\hat{W}_2,\hat{\Lambda}_2)\}$ are optimal solutions to \eqref{prob:generic-min-sum-2}. Thus, $U^*$ in \eqref{lem-proof-inequa-6} can be replaced by $\hat{U}_1:=(\hat{R}_1,\hat{W}_1,\hat{\Lambda}_1)$ and $\hat{U}_2:=(\hat{R}_2,\hat{W}_2,\hat{\Lambda}_2)$. This results in
\[\|U^{k+1}-\hat{U}_i\|_H^2\leq \|U^k-\hat{U}_i\|_H^2, \quad i = 1,2,\]
and we thus get the existence of the limits
\[\lim_{k\rightarrow\infty}\|U^k-\hat{U}_i\|_H = \eta_i < +\infty, \quad i = 1,2.\]
Now using the identity
\[\|U^k-\hat{U}_1\|_H^2 - \|U^k-\hat{U}_2\|_H^2 = -2\langle U^k, \hat{U}_1-\hat{U}_2\rangle_H + \|\hat{U}_1\|_H^2 - \|\hat{U}_2\|_H^2\]
and passing the limit we get
\[\eta_1^2 - \eta_2^2 = -2\langle \hat{U}_1, \hat{U}_1-\hat{U}_2\rangle_H + \|\hat{U}_1\|_H^2 - \|\hat{U}_2\|_H^2 = -\|\hat{U}_1-\hat{U}_2\|_H^2\]
and
\[\eta_1^2 - \eta_2^2 = -2\langle \hat{U}_2, \hat{U}_1-\hat{U}_2\rangle_H + \|\hat{U}_1\|_H^2 - \|\hat{U}_2\|_H^2 = \|\hat{U}_1-\hat{U}_2\|_H^2.\]
Thus we must have $\|\hat{U}_1-\hat{U}_2\|_H^2=0$ and hence the limit point of $\{(R^k,W^k,\Lambda^k)\}$ is unique.
\end{proof}

We now immediately have the global convergence result for Algorithm \ref{alg:pgadm} for solving Problem \eqref{prob:latent-variable}.
\begin{corollary}
The sequence $\{(R^k,S^k,L^k,\Lambda^k)\}$ produced by Algorithm \ref{alg:pgadm}
from any starting point converges to an optimal solution to Problem \eqref{prob:latent-variable}.
\end{corollary}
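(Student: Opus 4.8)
The plan is to deduce the corollary directly from Theorem~\ref{the:main-convergence} by matching, term by term, Problem~\eqref{prob:latent-variable} with Problem~\eqref{prob:generic-min-sum-2} and Algorithm~\ref{alg:pgadm} with the generic iteration~\eqref{alg:ADM-generic}. First I would record the problem identification: with $W=\begin{pmatrix} S\\ L\end{pmatrix}$, $A=[-I_{p\times p},\,I_{p\times p}]$, $F(R)=\langle R,\hat\Sigma_X\rangle-\log\det R$, and $G(W)=\alpha\|S\|_1+\beta\Tr(L)+\mathcal{I}(L\succeq 0)$, the constraint $R-S+L=0$ is exactly $R+AW=0$ and the two objective functions coincide, so \eqref{prob:latent-variable} \emph{is} \eqref{prob:generic-min-sum-2}. (As noted after \eqref{indicator-function}, the constraint $R\succ0$ is implicit in $\log\det R$, so nothing is lost by dropping it.)

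Second, I would verify that Algorithm~\ref{alg:pgadm} coincides with~\eqref{alg:ADM-generic} under this identification. The $R$-step and the $\Lambda$-step agree line by line. For the $W$-step, observe that $AW^k=-S^k+L^k$, hence $R^{k+1}+AW^k-\mu\Lambda^k=R^{k+1}-S^k+L^k-\mu\Lambda^k=G_R^k$, and $A^\top G_R^k=\begin{pmatrix}-G_R^k\\ G_R^k\end{pmatrix}$; since $A^\top A$ is block diagonal, the single proximal-gradient step on the block $W=[S;L]$ in~\eqref{alg:ADM-generic} decouples into the independent $S$- and $L$-updates of Steps~3 and~4, which are exactly~\eqref{admm-2-block-new-sub2-linearized-S} and~\eqref{admm-2-block-new-sub2-linearized-L}. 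Thus, provided the step size satisfies $0<\tau<1/2$ as required by Lemma~\ref{lem:Fejer-monotone} and Theorem~\ref{the:main-convergence}, the two algorithms generate the same sequence.

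Finally, I would invoke Theorem~\ref{the:main-convergence}: from any starting point, $\{(R^k,W^k,\Lambda^k)\}$ converges to a point $(\hat R,\hat W,\hat\Lambda)$ solving~\eqref{prob:generic-min-sum-2}. Writing $\hat W=\begin{pmatrix}\hat S\\ \hat L\end{pmatrix}$ and translating the KKT system back into the original variables shows that $(\hat R,\hat S,\hat L)$ is optimal for~\eqref{prob:latent-variable} with multiplier $\hat\Lambda$, and that $(R^k,S^k,L^k,\Lambda^k)\to(\hat R,\hat S,\hat L,\hat\Lambda)$. I do not expect any real obstacle: all the analytic content --- the Fej\'er-type contraction~\eqref{lem:conclusion-eq}, boundedness of the iterates, and the subsequential-limit-plus-uniqueness argument --- already lives in Lemma~\ref{lem:Fejer-monotone} and Theorem~\ref{the:main-convergence}, so the corollary is pure bookkeeping. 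The only point worth flagging is that this theory needs $\tau<1/2$, whereas the experiments in Section~\ref{sec:numerical} use the slightly larger $\tau=0.6$; the corollary should be understood as covering the regime $0<\tau<1/2$.
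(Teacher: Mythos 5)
Your proposal is correct and follows essentially the same route as the paper, which treats the corollary as an immediate consequence of Theorem~\ref{the:main-convergence} via exactly the identification $W=[S;L]$, $A=[-I,I]$, $F$, $G$ set up at the start of the appendix, so no further argument is needed beyond the bookkeeping you describe (and your caveat that the theory covers $0<\tau<1/2$ while the experiments use $\tau=0.6$ is accurate). One tiny slip: the decoupling of the $W$-step into the $S$- and $L$-updates is not because $A^\top A$ is block diagonal (it is $\begin{pmatrix} I & -I\\ -I & I\end{pmatrix}$), but because after linearization the proximal term $\|W-(W^k-\tau A^\top G_R^k)\|_F^2$ is itself separable across the two blocks.
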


\end{document}